\newtheorem{theorem}{Theorem}
\newtheorem{algorithm}{Algorithm}
\newtheorem{corollary}{Corollary}
\newtheorem{definition}{Definition}
\newtheorem{lemma}{Lemma}
\newtheorem{remark}{Remark}
\begin{document}

\title[\small{\textit{A novel hybrid method for equilibrium problems and fixed point problems}}]{\textbf{A novel hybrid method for equilibrium problems and fixed point problems}}
\author[\small{\textit{D. V. Hieu\\/Vol.?, No.?, pp.?-? (2015) \copyright JNAO}}]{\normalsize Dang Van Hieu$^a$\vspace{0.58cm} 
\\ \small
Department of Mathematics, Vietnam National University, Vietnam \\
334 Nguyen Trai Street, Thanh Xuan, Hanoi, Vietnam.\\
}
  
\thanks{$^a$Corresponding Author. Email: dv.hieu83@gmail.com}

\begin{abstract}
The paper proposes a novel hybrid method for solving equilibrium problems and fixed point problems. By constructing 
specially cutting-halfspaces, in this algorithm, only an optimization program is solved at each iteration without the extra-steps as in 
some previously known methods. The strongly convergence theorem is established and some numerical examples are 
presented to illustrate its convergence.

\vspace{7pt}
\noindent
\textbf{Keywords: }{Hybrid method; Extragradient method; Equilibrium problem; Nonexpansive mapping}

\vspace{2pt}
\noindent
\textbf{AMS Classification:} 65K10 . 65K15 . 90C33
\end{abstract}

\maketitle

\section{Introduction}\label{intro}
Let $H$ be a real Hilbert space and $C$ be a nonempty closed convex subset of $H$. Let $f:C\times C\to \Re$ be a bifunction. 
The equilibrium problem (EP) for the bifunction $f$ on $C$ is to find $x^*\in C$ such that
\begin{equation}\label{EP}
f(x^*,y)\ge 0,~\forall y\in C.
\end{equation}
The solution set of EP for $f$ on $C$ is denoted  by $EP(f,C)$. EP is also well-known as Ky Fan inequality \cite{KF1972}. 
It is very general in the sense that it includes many mathematical models as: variational inequalities, 
optimization problems, fixed point problems, Nash equilirium point problems, complementarity problems, operator equations, see, 
for instance \cite{BO1994,CH2005,DGM2003,MO1992}. A special case for the bifunction 
$f(x,y)=\left\langle A(x),y-x\right\rangle$, where $A:C\to H$ is a operator, then EP becomes the variational inequality 
problem: find $x^*\in C$ such that 
\begin{equation}\label{VIP}
\left\langle A(x^*),y-x^*\right\rangle\ge 0,~\forall y\in C.
\end{equation}
One of the most popular methods for solving EPs is the proximal point 
method (PPM) in which solution approximations are based on the resolvent \cite{CH2005} of equilibrium bifunctions 
\cite{AH2014b,CH2005,DHM2014,H2015,K2000,MPPP2012,SVH2013,TT2007}.
PPM was first introduced by Martinet \cite{M1970} for variational inequalities, and then it was extended for finding 
zero points of maximal monotone operators by Rockafellar \cite{R1976}. This method was further extended to Ky 
Fan inequalities by Konnov \cite{K2000} for monotone or weakly monotone bifunctions.

In recent years, the Korpelevich's extragradient method \cite{K1976} for variational inequalities has been extended and widely studied to EPs, 
see, for instance \cite{A2011,A2012,A2013,DHM2014,H2015a,VSH2013,QMH2008,SVH2013,SNV2012}. In 2008, Quoc et al. \cite{QMH2008} introduced 
the following algorithm for EPs in Euclidean space 

\begin{equation}\label{QMH2008}
\left \{
\begin{array}{ll}
y_{n}=  \underset{y\in C}{\rm argmin} \{ \lambda f(x_n, y) +\frac{1}{2}||x_n-y||^2\},\\
x_{n+1}=  \underset{y\in C}{\rm argmin} \{ \lambda f(y_n, y) +\frac{1}{2}||x_n-y||^2\}.
\end{array}
\right.
\end{equation}
Then, the algorithm was further extended to Hilbert spaces \cite{A2011,A2012,A2013,H2015a,VSH2013,SVH2013,SNV2012}. 
In 2013, for finding a common element of the solution set $EP(f,C)$ and the fixed point set $F(S)$ of a nonexpansive mapping $S:C\to C$, 
Anh \cite{A2013} introduced the following hybrid algorithm in Hilbert spaces
\begin{equation}\label{A2013}
\left \{
\begin{array}{ll}
y_{n}=  \underset{y\in C}{\rm argmin} \{ \lambda f(x_n, y) +\frac{1}{2}||x_n-y||^2\},\\
z_{n}=  \underset{y\in C}{\rm argmin} \{ \lambda f(y_n, y) +\frac{1}{2}||x_n-y||^2\},\\
t_n=\alpha_n x_n+(1-\alpha_n)S(w_n),\\
C_n=\left\{z\in C: ||t_{n} - z||\leq ||x_n-z||\right\},\\
Q_n=\left\{z\in C: \left\langle x_0-x_n,z-x_n\right\rangle\le 0\right\},\\
x_{n+1}=P_{C_n\cap Q_n}(x_0).
\end{array}
\right.
\end{equation}
And the author proved the sequence $\left\{x_n\right\}$ generated by (\ref{A2013}) converges strongly to $P_{EP(f,C)\cap F(S)}(x_0)$ 
under the hypothesises of the pseudomonotonicity and Lipschitz-type continuity of bifunction $f$. In the special case, EP (\ref{EP}) is 
VIP (\ref{VIP}) then (\ref{A2013}) becomes the following hybrid algorithm
\begin{equation}\label{NT2006a}
\left \{
\begin{array}{ll}
y_{n}=  P_C(x_n-\lambda A(x_n)),\\
z_{n}=  P_C(x_n-\lambda A(y_n)),\\
t_n=\alpha_n x_n+(1-\alpha_n)S(w_n),\\
C_n=\left\{z\in C: ||t_{n} - z||\leq ||x_n-z||\right\},\\
Q_n=\left\{z\in C: \left\langle x_0-x_n,z-x_n\right\rangle\le 0\right\},\\
x_{n+1}=P_{C_n\cap Q_n}(x_0),
\end{array}
\right.
\end{equation}
which was proposed by Nadezhkina and Takahashi in 2006 for variational inequalities and nonexpansive mappings \cite{NT2006a}.
Some modifications of (\ref{NT2006a}) were known as the subgradient extragradient algorithm \cite{CGR2011} in which the second 
projection onto $C$ was replaced by one onto a halfspace in Hilbert spaces and the self-adaptive subgradient extragradient algorithm 
\cite{G2015} in Euclidean spaces.

In the algorithms (\ref{QMH2008}), (\ref{A2013}) and others \cite{A2011,A2012,A2013,H2015a,VSH2013,SVH2013,SNV2012}, 
we see that two optimization programs onto the feasible set $C$ must be solved at each iteration. This seems to be costly and might seriously affect the efficiency of used methods if 
the structure of the constrained set $C$ is complex. To avoid 
solving the second optimization program in (\ref{A2013}) and the condition of Lipschitz-type continuity posed on the bifunction $f$, 
Strodiot et al. \cite{SNV2012} 
replaced it by the Armijo linesearch technique.
\begin{equation}\label{SNV2012}
\left \{
\begin{array}{ll}
y_{n}=  \underset{y\in C}{\rm argmin} \{ \lambda f(x_n, y) +\frac{1}{2}||x_n-y||^2\},\\
\mbox{Find}~$m$~ \mbox{the smallest nonnegative integer such that}\\
\left \{
\begin{array}{ll}
z_{n,m}=(1-\gamma^m)x_n+\gamma^m y_n,\\
f(z_{n,m},x_n)-f(z_{n,m},y_n)\ge \frac{\alpha}{2\lambda_n}||x_n-y_n||^2,\\
\end{array}
\right.\\
w_n=P_C(x_n-\sigma_n g_n),\\
t_n=\alpha_n x_n+(1-\alpha_n)\left(\beta_n w_n+(1-\beta_n)S(w_n)\right),\\
C_{n+1}=\left\{z\in C_n: ||t_{n} - z||\leq ||x_n-z||\right\},\\
x_{n+1}=P_{C_{n+1}}(x_0),
\end{array}
\right.
\end{equation}
where $g_n\in \partial_2 f(z_{n,m},x_n)$, $\sigma_n=f(z_{n,m},x_n)/||g_n||^2$ if $x_n\ne y_n$ and $\sigma_n=0$ otherwise.
 However, we still have to solve an optimization program, find an optimization direction, and compute 
a projection onto $C$ at each iteration. Moreover, the sets $C_{n+1}$ is not easy to compute. 

In this paper, motivated and inspired by the results in \cite{A2013,MS2015,SNV2012,VSH2013}, 
we introduce the following hybrid algorithm for finding a solution 
of an EP for a monotone and Lipschitz-type continuous bifunction $f$ which is also a fixed point of a 
nonexpansive mapping $S:C\to C$.
\begin{algorithm}[The hybrid algorithm without the extra-steps]\label{H2015}
$$
\left \{
\begin{array}{ll}
y_{n+1}=  \underset{y\in C}{\rm argmin} \{ \lambda f(y_n, y) +\frac{1}{2}||x_n-y||^2\},\\
z_{n+1}=\alpha_ny_{n+1}+(1-\alpha_n)Sy_{n+1},\\
x_{n+1}=P_{\Omega_n}(x_0),
\end{array}
\right.
$$
\end{algorithm}
\noindent where $\Omega_n=C_n\cap Q_n$ and $C_n,~Q_n$ are two specially constructed half-spaces (see Algorithm $\ref{algor1}$ in Section $\ref{main}$ below). 
In this algorithm we do not use the PPM and the resolvent of equilibrium bifunction \cite{CH2005,K2000}. Contrary 
to the extended extragradient methods and the Armijo linesearch methods \cite{A2011,A2012,A2013,DHM2014,H2015a,VSH2013,QMH2008,SNV2012,SVH2013}, in our proposed algorithm, only an optimization program is solved at 
each iterative step without the extra-steps. Note that $C_n$ and $Q_n$ in the processes (\ref{A2013}), (\ref{NT2006a}) and (\ref{SNV2012}) are constructed 
on the constrained set $C$ while $\Omega_n$ in 
Algorithm $\ref{H2015}$ is the intersection of two halfspaces, so $x_{n+1}$ can be expressed by an explicit formula, see, for instance \cite{CH2005,SS2000}. 

The remainders of the paper is organized as follows: Section \ref{pre} reviews several definitions and results for further use. Section \ref{main} deals with 
analyzing the convergence of the proposed algorithm. Finally, in Section \ref{Numer} we give two numerical examples to illustrate the convergence of the algorithm. 
\section{Preliminaries}\label{pre}
In this section, we recall some definitions and preliminary results used in this paper. A mapping $S:C\to H$ is said to be nonexpansive if $||S(x)-S(y)||\le ||x-y||$ for 
all $x,y\in C$. The set of fixed points of $S$ is denoted by $F(S)$. We have the following properties of a nonexpansive mapping, see \cite{GK1990} for more details.
\begin{lemma}\label{lem.demiclose}
Assume that $S:C\to H$ is a nonexpansive mapping. If $S$ has a fixed point, then 
\begin{enumerate}
\item [$\rm i.$] $F(S)$ is closed convex subset of $C$.
\item [$\rm ii.$] $I-S$ is demiclosed, i.e., whenever $\left\{x_n\right\}$ is a sequence in $C$ weakly converging to some $x\in C$ and the 
sequence $\left\{(I-S)x_n\right\}$ strongly converges to some $y$ , it follows that $(I-S)x=y$.
\end{enumerate}
\end{lemma}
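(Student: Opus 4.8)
The plan is to treat the two assertions separately, as they rely on different tools: part~(i) uses only the strict convexity of the Hilbert norm, while part~(ii) hinges on the interaction between weak and strong convergence. For the closedness in part~(i), I would argue directly from continuity: a nonexpansive map is $1$-Lipschitz, hence continuous, so if $\{x_n\}\subset F(S)$ with $x_n\to x$, then $Sx_n\to Sx$, and since $Sx_n=x_n\to x$ the uniqueness of limits gives $Sx=x$, i.e.\ $x\in F(S)$. For convexity, take $p,q\in F(S)$ and $z=tp+(1-t)q$ with $t\in[0,1]$; note $z\in C$ since $C$ is convex. Using nonexpansiveness I get $\|Sz-p\|=\|Sz-Sp\|\le\|z-p\|=(1-t)\|p-q\|$ and likewise $\|Sz-q\|\le t\|p-q\|$. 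Adding these and comparing with the triangle inequality $\|p-q\|\le\|Sz-p\|+\|Sz-q\|$ forces equality throughout, and the equality case of the triangle inequality in a (strictly convex) Hilbert space pins $Sz$ to the segment $[p,q]$ at parameter $t$, so $Sz=z$ and $z\in F(S)$.

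For part~(ii), suppose $x_n\rightharpoonup x$ in $C$ and $(I-S)x_n\to y$ strongly; I must show $(I-S)x=y$. Writing $y_n:=(I-S)x_n=x_n-Sx_n$, the core idea is to substitute $Sx_n=x_n-y_n$ into the nonexpansive inequality and push the weak limit through only those terms that can survive it. For an arbitrary $z\in C$, I start from $\|Sx_n-Sz\|^2\le\|x_n-z\|^2$, expand $\|(x_n-Sz)-y_n\|^2$ and $\|(x_n-z)+(z-Sz)\|^2$, and after cancelling the common $\|x_n-z\|^2$ obtain
\[
2\langle x_n-z,\,z-Sz\rangle+\|z-Sz\|^2-2\langle x_n-Sz,\,y_n\rangle+\|y_n\|^2\le 0.
\]

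Next I would pass to the limit $n\to\infty$. The three limits to control are $\langle x_n-z,\,z-Sz\rangle\to\langle x-z,\,z-Sz\rangle$ (weak convergence tested against the fixed vector $z-Sz$), $\|y_n\|^2\to\|y\|^2$ (strong convergence preserves the norm), and the delicate pairing $\langle x_n-Sz,\,y_n\rangle\to\langle x-Sz,\,y\rangle$, which converges because it couples a weakly convergent sequence with a strongly convergent one. This gives
\[
2\langle x-z,\,z-Sz\rangle+\|z-Sz\|^2-2\langle x-Sz,\,y\rangle+\|y\|^2\le 0,\qquad\forall z\in C.
\]
Specializing $z=x$ annihilates the first term and regroups the remainder into the single perfect square $\|(I-S)x-y\|^2\le 0$, forcing $(I-S)x=y$, as required.

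I expect the passage to the limit in part~(ii) to be the main obstacle: weak convergence does not survive the norm or the map $S$, so the whole scheme depends on having arranged the expansion so that every surviving term is either linear in $x_n$ (handled by weak convergence) or paired with the strongly convergent $y_n$ (handled by the weak-times-strong product rule). An alternative route would replace this expansion by the Opial property of Hilbert spaces, but the argument above has the advantage of treating a general limit $y$ uniformly and terminating directly in a perfect-square identity rather than a contradiction.
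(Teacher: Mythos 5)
Your proof is correct, but note that the paper does not actually prove this lemma: it is stated as a known preliminary result with a pointer to Goebel--Kirk \cite{GK1990}, so there is no internal argument to compare against, and what you supply is a complete, self-contained proof. Part (i) is sound: continuity of the $1$-Lipschitz map $S$ gives closedness, and for convexity the forced equalities $\|Sz-p\|=(1-t)\|p-q\|$, $\|Sz-q\|=t\|p-q\|$ together with the equality case of the triangle inequality (valid since Hilbert spaces are strictly convex) pin $Sz$ to the point $z$ of the segment; in a Hilbert space one could alternatively reach $Sz=z$ by a direct parallelogram-law computation, avoiding the appeal to strict convexity. Part (ii) is also valid: substituting $Sx_n=x_n-y_n$ into $\|Sx_n-Sz\|^2\le\|x_n-z\|^2$ and cancelling $\|x_n-z\|^2$ gives exactly your displayed inequality, and the three limit passages are all legitimate --- the linear term by weak convergence, $\|y_n\|^2$ by strong convergence, and the mixed term $\langle x_n-Sz,\,y_n\rangle\to\langle x-Sz,\,y\rangle$ because weakly convergent sequences are bounded, so $|\langle x_n-Sz,\,y_n-y\rangle|\le\|x_n-Sz\|\,\|y_n-y\|\to 0$. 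The specialization $z=x$ (permissible since the statement assumes $x\in C$) then yields $\|(I-S)x-y\|^2\le 0$. Your route differs from the most common textbook proof of the demiclosedness principle, which argues via Opial's property and a contradiction, typically for the case $y=0$; your expansion buys exactly what you claim --- it treats an arbitrary strong limit $y$ uniformly and terminates in a perfect-square identity --- at the cost of being tied to the inner-product structure, whereas the Opial-based argument extends to Banach spaces satisfying Opial's condition.
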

Next, we present some concepts of the monotonicity of a bifunction and an operator (see, for instance \cite{BO1994,DHM2014,MO1992}).
\begin{definition} A bifunction $f:C\times C\to \Re$ is said to be
\begin{itemize}
\item [$\rm i.$] strongly monotone on $C$ if there exists a constant $\gamma>0$ such that
$$ f(x,y)+f(y,x)\le -\gamma ||x-y||^2,~\forall x,y\in C; $$
\item [$\rm ii.$] monotone on $C$ if 
$$ f(x,y)+f(y,x)\le 0,~\forall x,y\in C; $$
\item [$\rm iii.$] pseudomonotone on $C$ if 
$$ f(x,y)\ge 0 \Longrightarrow f(y,x)\le 0,~\forall x,y\in C;$$
\item [$\rm iv.$] Lipschitz-type continuous on $C$ if there exist two positive constants $c_1,c_2$ such that
$$ f(x,y) + f(y,z) \geq f(x,z) - c_1||x-y||^2 - c_2||y-z||^2, ~ \forall x,y,z \in C.$$
\end{itemize}
\end{definition}
From the definitions above, it is clear that  $\rm i.\Longrightarrow ii. \Longrightarrow iii.$ 
\begin{definition}
An operator $A:C \to H$ is said to be
\begin{itemize}
\item [$\rm i.$] monotone on $C$ if $\left\langle A(x)-A(y),x-y\right\rangle\ge 0$, for all $x,y\in C$;
\item [$\rm ii.$] pseudomonotone on $C$ if
$$\left\langle A(x)-A(y),x-y\right\rangle\ge 0\Longrightarrow \left\langle A(y)-A(x),y-x\right\rangle\le 0$$
for all $x,y\in C$;
\item [$\rm iii.$] $\alpha$ - inverse strongly monotone on $C$ if there exists a positive constant $\alpha$ such that
$$
\left\langle A(x)-A(y), x-y\right\rangle\ge \alpha||A(x)-A(y)||^2, \quad \forall x,y\in C;
$$
\item [$\rm iv.$] $L$ - Lipschitz continuous on $C$ if there exists a positive constant $L$ such that $||A(x)-A(y)||\le L||x-y||$ for all $x,y\in C$. 
\end{itemize}
\end{definition}
For solving EP $(\ref{EP})$, we assume that the 
bifunction $f$ satisfies the following conditions:
\begin{itemize}
\item[(A1).] $f$ is monotone on $C$ and $f(x,x)=0$ for all $x\in C$;
\item [(A2).]  $f$ is Lipschitz-type continuous on $C$;
\item [(A3).]   $f$ is weakly continuous on $C\times C$;
\item [(A4).]  $f(x,.)$ is convex and subdifferentiable on $C$  for every fixed $x\in C.$
\end{itemize}  
It is easy to show that under the 
assumptions $\rm (A1),(A3),(A4)$, the solution set $EP(f,C)$ of EP $(\ref{EP})$  is closed and convex 
(see, for instance \cite{QMH2008}). Therefore, from Lemma $\ref{lem.demiclose}$, the solution set $EP(f,C)\cap F(S)$ is closed and convex. 
In this paper, we assume that $EP(f,C)\cap F(S)$ is nonempty.

The metric projection $P_C:H\to C$ is defined by $P_C x=\arg\min\left\{\left\|y-x\right\|:y\in C\right\}$. Since $C$ is nonempty, closed and convex, 
$P_Cx$ exists and is unique. It is also known that $P_C$ has the following characteristic properties, 
see \cite{GR1984} for more details.
\begin{lemma}\label{lem.PropertyPC}
Let $P_C:H\to C$ be the metric projection from $H$ onto $C$. Then
\begin{itemize}
\item [$\rm i.$] For all $x\in C, y\in H$,
\begin{equation}\label{eq:ProperOfPC}
\left\|x-P_C y\right\|^2+\left\|P_C y-y\right\|^2\le \left\|x-y\right\|^2.
\end{equation}
\item [$\rm ii.$] $z=P_C x$ if and only if 
\begin{equation}\label{eq:EquivalentPC}
\left\langle x-z,z-y \right\rangle \ge 0,\quad \forall y\in C.
\end{equation}
\end{itemize}
\end{lemma}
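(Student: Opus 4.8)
The plan is to prove part (ii) first, since the variational (obtuse-angle) characterization immediately yields the inequality in part (i). For the forward implication of (ii), I would use that $z=P_C x$ minimizes $\varphi(w)=\|w-x\|^2$ over the convex set $C$. Fixing an arbitrary $y\in C$ and $t\in(0,1)$, convexity gives $z+t(y-z)\in C$, so $\|z-x\|^2\le\|z+t(y-z)-x\|^2$. Expanding the right-hand side produces $0\le 2t\langle z-x,y-z\rangle+t^2\|y-z\|^2$; dividing by $t$ and letting $t\to 0^+$ leaves $\langle z-x,y-z\rangle\ge 0$, which is exactly the asserted inequality $\langle x-z,z-y\rangle\ge 0$.

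For the reverse implication of (ii), suppose $\langle x-z,z-y\rangle\ge 0$ for all $y\in C$. I would expand $\|y-x\|^2=\|y-z\|^2+2\langle y-z,z-x\rangle+\|z-x\|^2$ and observe that the hypothesis makes the middle term nonnegative, so $\|y-x\|^2\ge\|z-x\|^2$ for every $y\in C$, with equality forcing $y=z$. Hence $z$ is the unique minimizer, i.e. $z=P_C x$; existence and uniqueness of the projection were already recorded just before the lemma.

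Finally, I would deduce part (i) directly from (ii). Writing $p=P_C y$ and applying (ii) to the point $y$ being projected gives $\langle y-p,p-w\rangle\ge 0$ for all $w\in C$; taking $w=x\in C$ yields $\langle x-p,p-y\rangle\ge 0$. Expanding $\|x-y\|^2=\|x-p\|^2+2\langle x-p,p-y\rangle+\|p-y\|^2$ and discarding the nonnegative cross term gives $\|x-p\|^2+\|p-y\|^2\le\|x-y\|^2$, which is precisely the claimed estimate.

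I do not anticipate a genuine obstacle here: this is the standard characterization of the metric projection in a Hilbert space. The only points requiring care are the sign bookkeeping in the inner-product identities (the statement is phrased with $\langle x-z,z-y\rangle$ rather than $\langle z-x,y-z\rangle$, so one must flip signs consistently) and the one-sided limit $t\to 0^+$ in the forward direction of (ii), which relies on convexity of $C$ to keep $z+t(y-z)$ feasible. Both parts use only the inner-product expansion of a squared norm, so nothing beyond the already-assumed existence of $P_C$ is needed.
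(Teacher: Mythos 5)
Your proof is correct, but note that the paper offers no proof of this lemma at all: it is stated as a known property of the metric projection with a pointer to Goebel and Reich \cite{GR1984}, so there is no internal argument to compare yours against. What you supply is the classical textbook derivation --- the variational characterization (ii) obtained by perturbing along the segment $z+t(y-z)$ and letting $t\to 0^+$, its converse by expanding the square, and then (i) as an immediate corollary of (ii) --- and the sign bookkeeping checks out at every step. One point you should make explicit: in the reverse implication of (ii), the hypothesis $\left\langle x-z,z-y\right\rangle\ge 0$ for all $y\in C$ only yields $z=P_C x$ if one also assumes $z\in C$; your phrase ``hence $z$ is the unique minimizer'' silently uses this membership. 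Without it the implication fails: in $H=\Re$ with $C=[0,1]$ and $x=2$, the point $z=3/2$ satisfies $(2-z)(z-y)\ge 0$ for every $y\in[0,1]$ yet is not the projection. The lemma as stated in the paper carries the same implicit assumption (the characterization is meant for candidates $z\in C$), so this is a matter of phrasing rather than a flaw in your approach, but a referee would ask you to insert ``$z\in C$'' into the hypothesis of the converse.
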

The subdifferential of a function $g:C\to \Re$ at $x$ is defined by
\begin{equation*}\label{eq:Subdifferential}
\partial g(x)=\left\{ w\in H: g(y)-g(x)\ge \left\langle w, y-x\right\rangle,~\forall y\in C\right\}.
\end{equation*}
We recall that the normal cone of $C$ at $x\in C$ is defined by
\begin{equation*}\label{eq:NormalCone}
N_C(x)=\left\{w\in H:\left\langle w,y-x  \right\rangle \le 0,~\forall y\in C\right\}.
\end{equation*}
\begin{definition}[Weakly lower semicontinuity]
A function $\varphi: H\to \Re$ is called weakly lower semicontinuous at $x\in H$ if for any sequence $\left\{x_n\right\}$ in $H$ converges weakly to $x$ then 
$$\varphi(x)\le \lim\inf_{n\to\infty}\varphi(x_n). $$
\end{definition}
It is well-known that the functional $\varphi(x):=||x||^2$ is convex and weakly lower semicontinuous. 
Any Hilbert space has the Kadec-Klee property (see, for instance \cite{GK1990}), i.e., if $\left\{x_n\right\}$ is a sequence in $H$ such that $x_n\rightharpoonup x$ 
and $||x_n||\to ||x||$ then $x_n\to x$ as $n\to\infty$.
 
We need the following lemmas for analyzing the convergence of Algorithm $\ref{H2015}$.
\begin{lemma}\label{lem.Equivalent_MinPro}
Let $C$ be a convex subset of a real Hilbert space H and $g:C\to \Re$ be a convex and subdifferentiable function on $C$. Then, 
$x^*$ is a solution to the following convex optimization problem
\begin{equation*}\min\left\{g(x):x\in C\right\}
\end{equation*}
if and only if  ~  $0\in \partial g(x^*)+N_C(x^*)$, where $\partial g(.)$ denotes the subdifferential of $g$ and $N_C(x^*)$ is the normal cone 
of  $C$ at $x^*$.
\end{lemma}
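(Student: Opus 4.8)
The plan is to prove the two implications separately, treating the sufficiency (the ``if'') as the routine direction and reserving the real work for the necessity (the ``only if''). Throughout I would use only the definitions of the subdifferential $\partial g$ and of the normal cone $N_C$ already recorded above, together with the elementary Fermat rule that a convex function attains its global minimum at a point exactly where $0$ belongs to its subdifferential there.

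For the sufficiency, suppose $0\in\partial g(x^*)+N_C(x^*)$. Then I can pick $w\in\partial g(x^*)$ and $v\in N_C(x^*)$ with $w+v=0$. The subdifferential inequality gives $g(y)-g(x^*)\ge\langle w,y-x^*\rangle$ for every $y\in C$, while $v=-w\in N_C(x^*)$ yields $\langle -w,y-x^*\rangle\le 0$, i.e.\ $\langle w,y-x^*\rangle\ge 0$, for every $y\in C$. Combining these gives $g(y)\ge g(x^*)$ for all $y\in C$, so $x^*$ solves the minimization problem. This direction needs nothing beyond the two defining inequalities.

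For the necessity I would reformulate the constrained problem as an unconstrained one. Let $\delta_C$ denote the indicator function of $C$, equal to $0$ on $C$ and $+\infty$ off $C$; then $x^*$ minimizes $g$ over $C$ if and only if $x^*$ minimizes $g+\delta_C$ over all of $H$. By the Fermat rule the latter is equivalent to $0\in\partial(g+\delta_C)(x^*)$. It then remains to split this subdifferential as $\partial(g+\delta_C)(x^*)=\partial g(x^*)+\partial\delta_C(x^*)$ and to use the identity $\partial\delta_C(x^*)=N_C(x^*)$, which is immediate from the definitions of the two objects; combining these gives $0\in\partial g(x^*)+N_C(x^*)$.

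The main obstacle is precisely the additivity of the subdifferential invoked in the last step: the Moreau--Rockafellar sum rule $\partial(g+\delta_C)=\partial g+\partial\delta_C$ holds in general only under a constraint qualification (for instance that $g$ be finite and continuous at some point of $C$, or that the relative interiors of the two domains meet). Under the standing hypotheses here, namely $g$ convex and subdifferentiable on all of $C$, this qualification is satisfied, so the sum rule applies and the proof closes. An alternative that sidesteps the sum rule would work through the directional derivative $g'(x^*;\cdot)$, whose nonnegativity on the feasible directions $y-x^*$ expresses optimality; but converting the pointwise inequality $\max_{w\in\partial g(x^*)}\langle w,y-x^*\rangle\ge 0$ into a single separating subgradient again requires a minimax or separation argument of the same strength, so I would prefer the indicator-function route.
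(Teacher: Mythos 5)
Your sufficiency argument is correct and complete. The problem is in the necessity direction: you route it through the Moreau--Rockafellar sum rule $\partial(g+\delta_C)(x^*)=\partial g(x^*)+\partial\delta_C(x^*)$ and then assert that the required constraint qualification ``is satisfied'' under the standing hypotheses. It is not, at least not in the forms you cite. The hypotheses give only that $g$ is convex, real-valued and subdifferentiable \emph{on $C$}; in an infinite-dimensional Hilbert space $C$ may have empty interior, so the extension of $g$ by $+\infty$ off $C$ is continuous at no point of $H$, and neither is $\delta_C$; moreover ``the relative interiors of the two domains meet'' is a finite-dimensional qualification that has no force here (relative interiors of convex sets in infinite dimensions can be empty, and even nonempty ones do not validate the sum rule). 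Concretely, take $C$ to be the Hilbert cube $\{x\in\ell^2: 0\le x_n\le 2^{-n}\}$ and $g$ the restriction to $C$ of a bounded linear functional: $g$ is subdifferentiable at every point of $C$, yet every qualification you mention fails, so your appeal to the sum rule is unjustified (the conclusion still holds, but for a different reason, see below). As written, the ``only if'' half therefore rests on an unverified, and in general unverifiable, hypothesis.

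The gap is easy to repair, and the repair makes the indicator-function detour unnecessary: the paper defines the subdifferential relative to $C$, namely $\partial g(x)=\{w\in H: g(y)-g(x)\ge\langle w,y-x\rangle,\ \forall y\in C\}$, with $y$ ranging over $C$ only, not over $H$. With this definition, if $x^*$ minimizes $g$ over $C$, then $g(y)-g(x^*)\ge 0=\langle 0,y-x^*\rangle$ for all $y\in C$, i.e.\ $0\in\partial g(x^*)$; since always $0\in N_C(x^*)$, it follows at once that $0\in\partial g(x^*)+N_C(x^*)$, with no sum rule and no qualification. (Equivalently: with this relative definition one has $\partial g(x^*)=\partial(g+\delta_C)(x^*)$ identically and $\partial g(x^*)+N_C(x^*)=\partial g(x^*)$ by absorption, so the ``sum rule'' you need is a tautology rather than a theorem.) For comparison, the paper offers no argument at all, only the citation to Theorem 27.4 of \cite{R1970}, whose classical finite-dimensional proof is exactly your indicator-plus-sum-rule scheme; the infinite-dimensional qualification issue you tried to wave away is precisely what that citation glosses over, and the relative definition of $\partial g$ is what renders the statement harmless.
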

\begin{proof}
This is an infinite version of Theorem 27.4 in \cite{R1970} and is similarly proved.
\end{proof}
\begin{lemma}\cite{MS2015}\label{lem.technique}
Let $\left\{M_n\right\}$, $\left\{N_n\right\}$, $\left\{P_n\right\}$ be nonnegative real sequences, 
$\alpha,\beta \in \Re$ and for all $n\ge 0$ the following inequality holds
$$ M_n\le N_n+\beta P_n-\alpha P_{n+1}. $$
If $\sum_{n=0}^\infty N_n <+\infty$ and $\alpha>\beta\ge 0$ then $\lim_{n\to\infty}M_n=0$.
\end{lemma}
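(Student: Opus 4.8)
The plan is to exploit the summability of $\{N_n\}$ by summing the given inequality over $n$ and showing that the terms involving $\{P_n\}$ contribute only a bounded amount; this yields $\sum_n M_n<+\infty$, whence $M_n\to 0$ follows from the necessary condition for convergence of a nonnegative series.

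First I would fix an arbitrary integer $N\ge 0$ and sum the hypothesis $M_n\le N_n+\beta P_n-\alpha P_{n+1}$ over $n=0,1,\dots,N$. Writing out the two $P$-sums and shifting the index in $\sum_{n=0}^N P_{n+1}$, I would regroup
\[
\beta\sum_{n=0}^N P_n-\alpha\sum_{n=0}^N P_{n+1}
=\beta P_0+(\beta-\alpha)\sum_{k=1}^N P_k-\alpha P_{N+1}.
\]
This is the one place where the hypothesis $\alpha>\beta\ge 0$ enters: since $\beta-\alpha<0$, $\alpha>0$, and each $P_k\ge 0$, every term after $\beta P_0$ on the right-hand side is nonpositive, so the whole expression is bounded above by $\beta P_0$, \emph{uniformly} in $N$.

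Combining this bound with the summation gives
\[
\sum_{n=0}^N M_n\le \sum_{n=0}^N N_n+\beta P_0\le \sum_{n=0}^\infty N_n+\beta P_0,
\]
a finite constant independent of $N$ because $\sum_n N_n<+\infty$. Since $M_n\ge 0$, the partial sums $\sum_{n=0}^N M_n$ are nondecreasing and bounded above, hence $\sum_{n=0}^\infty M_n<+\infty$. As the general term of a convergent series tends to zero, I conclude $\lim_{n\to\infty}M_n=0$, as required.

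I do not expect a genuine obstacle here; the argument is essentially a telescoping estimate. The one subtlety worth flagging is the index shift in the second $P$-sum, which is precisely what converts the pointwise condition $\alpha>\beta$ into the cancellation that keeps the $\{P_n\}$ contribution bounded. One should also observe that $\alpha>\beta\ge 0$ forces $\alpha>0$, which is what legitimizes discarding the nonnegative term $\alpha P_{N+1}$; note also that the argument needs neither boundedness of $\{P_n\}$ nor convergence of $\{P_n\}$ as separate hypotheses.
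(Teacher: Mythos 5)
Your proof is correct. Note, however, that the paper itself gives no argument for this lemma at all: it is imported verbatim from Malitsky--Semenov \cite{MS2015}, so there is no in-paper proof to compare against. Your telescoping argument is the natural self-contained one: summing $M_n\le N_n+\beta P_n-\alpha P_{n+1}$ over $n=0,\dots,N$, shifting the index in the second $P$-sum to get
$$\beta\sum_{n=0}^N P_n-\alpha\sum_{n=0}^N P_{n+1}=\beta P_0+(\beta-\alpha)\sum_{k=1}^N P_k-\alpha P_{N+1}\le \beta P_0,$$
and concluding $\sum_{n=0}^\infty M_n\le \sum_{n=0}^\infty N_n+\beta P_0<+\infty$, hence $M_n\to 0$. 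All the hypotheses are used exactly where they should be: $\alpha>\beta$ kills the bulk sum, $\alpha>0$ lets you drop $-\alpha P_{N+1}$, and nonnegativity of $M_n$ turns boundedness of the partial sums into convergence of the series. An equivalent route (closer to the original source) is to observe first that $\alpha P_{n+1}\le N_n+\beta P_n$, so $P_{n+1}\le \tfrac{\beta}{\alpha}P_n+\tfrac{1}{\alpha}N_n$ with ratio $\beta/\alpha<1$, which forces $\sum_n P_n<+\infty$, and then sum the original inequality; both arguments deliver the stronger conclusion $\sum_n M_n<+\infty$, of which $M_n\to 0$ is a corollary.
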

\section{Convergence analysis}\label{main}
\setcounter{theorem}{0}
\setcounter{remark}{0}
\setcounter{corollary}{0}
\setcounter{algorithm}{0}
In this section, we rewrite our algorithm for more details and analyze its convergence.
\begin{algorithm}\label{algor1} \textbf{Initialization.} Chose $x_0=x_1 \in H, ~y_0=y_1\in C$ and set $C_0=Q_0=H$. 
The parameters $\lambda,k$ and $\left\{\alpha_n\right\}$ satisfy the following conditions
\begin{itemize}
\item [$\rm a. $] $0< \lambda <\frac{1}{2(c_1+c_2)},~ k>\frac{1}{1-2\lambda(c_1+c_2)}$.
\item [$\rm b. $] $\alpha_n\in [0,a]$ for some $a\in (0,1)$.
\end{itemize}
\textbf{Step 1.} Solve a strongly convex optimization program
$$
\left \{
\begin{array}{ll}
y_{n+1}=  \underset{y\in C}{\rm argmin} \{ \lambda f(y_n, y) +\frac{1}{2}||x_n-y||^2\},\\
z_{n+1}=\alpha_n y_{n+1}+(1-\alpha_n)Sy_{n+1}.\\
\end{array}
\right.
$$
\textbf{Step 2.} Compute $x_{n+1}=P_{\Omega_n}(x_0),$
where $\Omega_n=C_n\cap Q_n$,
\begin{eqnarray*}
&&C_n=\left\{z\in H: ||w_{n+1} - z||^2\leq ||x_n-z||^2+\epsilon_n \right\},\\
&&Q_n=\left\{z\in H: \left\langle x_0-x_n,z-x_n\right\rangle\le 0\right\},
\end{eqnarray*}
$\epsilon_n=k||x_n-x_{n-1}||^2+2\lambda c_2||y_n-y_{n-1}||^2-(1-\frac{1}{k}-2\lambda c_1)||y_{n+1}-y_{n}||^2$ and 
$w_{n+1}=\underset{t=y_{n+1},z_{n+1}}{\rm \arg\max} \left\{||t-x_n||\right\}$. 
Set $n:=n+1$ and go back \textbf{Step 1.}
\end{algorithm}
\begin{remark}
In fact, $w_{n+1}$ in Algorithm \ref{algor1} equals to either $y_{n+1}$ or $z_{n+1}$, i.e., $w_{n+1}=y_{n+1}$ if $||y_{n+1}-x_n||\ge ||z_{n+1}-x_n||$ and $w_{n+1}=z_{n+1}$ otherwise.
\end{remark}
\begin{lemma}\label{lem0}
Let $\left\{x_n\right\},\left\{y_{n}\right\}$ be the sequences generated by Algorithm $\ref{algor1}$.Then, there holds the following relation for all $n\ge 0$
$$\left\langle y_{n+1}-x_n,y-y_{n+1}\right\rangle\ge \lambda\left(f(y_n,y_{n+1})-f(y_n,y)\right),~\forall y\in C.$$
\end{lemma}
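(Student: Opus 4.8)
The plan is to characterize $y_{n+1}$ through the first-order optimality condition for the strongly convex program in Step 1 and then unwind that condition using the subdifferential of $f(y_n,\cdot)$. Set $g(y):=\lambda f(y_n,y)+\frac{1}{2}\|x_n-y\|^2$. By assumption (A4) the map $f(y_n,\cdot)$ is convex and subdifferentiable on $C$, and the quadratic term is convex and differentiable, so $g$ is convex and subdifferentiable on $C$. Since $y_{n+1}$ minimizes $g$ over $C$, Lemma \ref{lem.Equivalent_MinPro} yields $0\in\partial g(y_{n+1})+N_C(y_{n+1})$.

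Next I would compute the subdifferential. Because the quadratic part is differentiable at $y_{n+1}$ with gradient $y_{n+1}-x_n$, the sum rule gives $\partial g(y_{n+1})=\lambda\,\partial_2 f(y_n,y_{n+1})+(y_{n+1}-x_n)$. Hence there exist $w\in\partial_2 f(y_n,y_{n+1})$ and $\bar w\in N_C(y_{n+1})$ with $\lambda w+(y_{n+1}-x_n)+\bar w=0$, that is, $y_{n+1}-x_n=-\lambda w-\bar w$.

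Now fix an arbitrary $y\in C$. Taking the inner product of this identity with $y-y_{n+1}$ gives $\langle y_{n+1}-x_n,\,y-y_{n+1}\rangle=-\lambda\langle w,\,y-y_{n+1}\rangle-\langle\bar w,\,y-y_{n+1}\rangle$. The inclusion $\bar w\in N_C(y_{n+1})$ means $\langle\bar w,\,y-y_{n+1}\rangle\le 0$, so the last term is nonnegative and may be dropped to obtain $\langle y_{n+1}-x_n,\,y-y_{n+1}\rangle\ge\lambda\langle w,\,y_{n+1}-y\rangle$. Finally, the defining inequality of $w\in\partial_2 f(y_n,y_{n+1})$, namely $f(y_n,y)-f(y_n,y_{n+1})\ge\langle w,\,y-y_{n+1}\rangle$, rearranges to $\langle w,\,y_{n+1}-y\rangle\ge f(y_n,y_{n+1})-f(y_n,y)$, and combining the two estimates gives the claim.

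There is no genuinely hard step here; the argument is a routine application of convex subdifferential calculus. The only point demanding care is the subdifferential sum rule together with the bookkeeping of signs: one must correctly identify the gradient of the quadratic term as $y_{n+1}-x_n$ and keep track of which direction the normal-cone condition and the subgradient inequality each supply, so that both contributions point the same way toward the final lower bound.
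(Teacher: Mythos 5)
Your proof is correct and follows essentially the same route as the paper: both apply Lemma \ref{lem.Equivalent_MinPro} to the strongly convex program defining $y_{n+1}$, decompose the optimality condition into $\lambda w+(y_{n+1}-x_n)+\bar w=0$ with $w\in\partial_2 f(y_n,y_{n+1})$ and $\bar w\in N_C(y_{n+1})$, drop the normal-cone term, and finish with the subgradient inequality. The only cosmetic difference is that you invoke the subdifferential sum rule explicitly, which the paper leaves implicit.
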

\begin{proof}
Lemma $\ref{lem.Equivalent_MinPro}$ and the definition of $y_{n+1}$ imply that
\begin{equation*}
0\in \partial_2\left(\lambda f(y_n,y)+\frac{1}{2}||x_n-y||^2\right)(y_{n+1})+N_C(y_{n+1}).
\end{equation*}
Hence, there exist $w\in \partial_2 f(y_n,y_{n+1}):=\partial f(y_n,.)(y_{n+1})$ and $\bar{w}\in N_C(y_{n+1})$ such that
$
\lambda w+y_{n+1}-x_n+\bar{w}=0.
$
Thus, for all $y\in C$, we have
\begin{align*}
\left\langle y_{n+1}-x_n,y-y_{n+1}\right\rangle&=\lambda \left\langle w,y_{n+1}-y\right\rangle+\left\langle \bar{w},y_{n+1}-y\right\rangle\\
&\ge\lambda \left\langle w,y_{n+1}-y\right\rangle
\end{align*}
because of the definition of $N_C$. By $w\in \partial_2 f(y_n,y_{n+1})$,
\begin{equation*}
f(y_n,y)-f(y_n,y_{n+1})\ge \left\langle w,y-y_{n+1}\right\rangle,~\forall y\in C.
\end{equation*}
From the last two inequalities, we obtain the desired conclusion. 
Lemma $\ref{lem0}$ is proved.
\end{proof}
The following lemma plays an important role in proving the convergence of Algorithm $\ref{algor1}$.
\begin{lemma}\label{lem1}
Assume that $x^*\in EP(f,C)\cap F(S)$. Let $\left\{x_n\right\},\left\{w_{n}\right\}$ be the sequences generated by Algorithm $\ref{algor1}$. 
Then, there holds the following relation for all $n\ge 0$
$$||w_{n+1}-x^*||^2\leq ||x_n-x^*||^2+\epsilon_n.$$
\end{lemma}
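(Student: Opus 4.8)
The plan is to prove the stronger pair of inequalities $\|y_{n+1}-x^*\|^2\le \|x_n-x^*\|^2+\epsilon_n$ and $\|z_{n+1}-x^*\|^2\le \|x_n-x^*\|^2+\epsilon_n$; since $w_{n+1}$ equals either $y_{n+1}$ or $z_{n+1}$, the claimed bound for $w_{n+1}$ follows at once. The second inequality is the easy half: because $x^*\in F(S)$ and $S$ is nonexpansive, $\|Sy_{n+1}-x^*\|\le \|y_{n+1}-x^*\|$, so applying convexity of the squared norm to $z_{n+1}=\alpha_n y_{n+1}+(1-\alpha_n)Sy_{n+1}$ gives $\|z_{n+1}-x^*\|^2\le \alpha_n\|y_{n+1}-x^*\|^2+(1-\alpha_n)\|Sy_{n+1}-x^*\|^2\le \|y_{n+1}-x^*\|^2$. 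Thus everything reduces to the estimate for $y_{n+1}$.

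For $y_{n+1}$ I would start from Lemma \ref{lem0} with the test point $y=x^*$, which gives $\langle y_{n+1}-x_n,x^*-y_{n+1}\rangle\ge \lambda(f(y_n,y_{n+1})-f(y_n,x^*))$. Expanding the left side by the identity $2\langle y_{n+1}-x_n,x^*-y_{n+1}\rangle=\|x_n-x^*\|^2-\|y_{n+1}-x_n\|^2-\|y_{n+1}-x^*\|^2$ and observing that $f(y_n,x^*)\le 0$ (this follows from $x^*\in EP(f,C)$, i.e. $f(x^*,y_n)\ge 0$, together with monotonicity (A1)) yields $\|y_{n+1}-x^*\|^2\le \|x_n-x^*\|^2-\|y_{n+1}-x_n\|^2-2\lambda f(y_n,y_{n+1})$.

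The crux is to control $-2\lambda f(y_n,y_{n+1})$. Here I would apply Lemma \ref{lem0} one index earlier (the step producing $y_n$ from $x_{n-1}$) with test point $y=y_{n+1}$, getting $\langle y_n-x_{n-1},y_{n+1}-y_n\rangle\ge \lambda(f(y_{n-1},y_n)-f(y_{n-1},y_{n+1}))$, and then invoke the Lipschitz-type continuity (A2) on the triple $(y_{n-1},y_n,y_{n+1})$ to replace $f(y_{n-1},y_n)-f(y_{n-1},y_{n+1})$ by $-f(y_n,y_{n+1})-c_1\|y_{n-1}-y_n\|^2-c_2\|y_n-y_{n+1}\|^2$. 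Combining gives $-2\lambda f(y_n,y_{n+1})\le 2\langle y_n-x_{n-1},y_{n+1}-y_n\rangle+2\lambda c_1\|y_n-y_{n-1}\|^2+2\lambda c_2\|y_{n+1}-y_n\|^2$.

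The main obstacle, and the step needing the most care, is turning the cross term $2\langle y_n-x_{n-1},y_{n+1}-y_n\rangle$ into the quantities appearing in $\epsilon_n$. I would split $y_n-x_{n-1}=(y_n-y_{n+1})+(y_{n+1}-x_n)+(x_n-x_{n-1})$ and expand termwise: the first piece gives $-2\|y_{n+1}-y_n\|^2$; the second, via $2\langle y_{n+1}-x_n,y_{n+1}-y_n\rangle=\|y_{n+1}-x_n\|^2+\|y_{n+1}-y_n\|^2-\|y_n-x_n\|^2$, produces a $+\|y_{n+1}-x_n\|^2$ that exactly cancels the $-\|y_{n+1}-x_n\|^2$ carried from the first estimate; and the third is bounded by Young's inequality $2\langle x_n-x_{n-1},y_{n+1}-y_n\rangle\le k\|x_n-x_{n-1}\|^2+\tfrac1k\|y_{n+1}-y_n\|^2$, which is precisely where the parameter $k$ and the term $k\|x_n-x_{n-1}\|^2$ of $\epsilon_n$ enter. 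Collecting all the $\|y_{n+1}-y_n\|^2$ contributions (the $-1+\tfrac1k$ from the expansions together with the $2\lambda c$ from the Lipschitz step) reproduces the coefficient $-(1-\tfrac1k-2\lambda c_1)$, and discarding the harmless $-\|y_n-x_n\|^2\le 0$ delivers $\|y_{n+1}-x^*\|^2\le \|x_n-x^*\|^2+\epsilon_n$. I expect the only delicate bookkeeping to be matching the two Lipschitz constants $c_1,c_2$ against the two difference terms in $\epsilon_n$, which is settled by fixing the order of the arguments in (A2) consistently.
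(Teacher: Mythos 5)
Your proposal is correct and is essentially the paper's own proof: both arguments reduce the claim to the estimate for $y_{n+1}$ via the nonexpansiveness of $S$, combine Lemma \ref{lem0} at the current index (with $y=x^*$, using monotonicity to discard $f(y_n,x^*)$) and at the previous index (with $y=y_{n+1}$), invoke the Lipschitz-type condition (A2) on the triple $(y_{n-1},y_n,y_{n+1})$, and close with a squared-norm expansion plus Young's inequality with parameter $k$; your three-term splitting of $y_n-x_{n-1}$ is only a cosmetic reorganization of the paper's expansion of $\left\|x_n-y_{n+1}\right\|^2$ through $x_{n-1}$ and $y_n$. The $c_1$/$c_2$ bookkeeping you flag at the end is not a defect of your argument but of the paper itself: both your derivation and the paper's final estimate produce $2\lambda c_1\left\|y_n-y_{n-1}\right\|^2-\left(1-\frac{1}{k}-2\lambda c_2\right)\left\|y_{n+1}-y_n\right\|^2$, whereas the stated $\epsilon_n$ in Algorithm \ref{algor1} has $c_1$ and $c_2$ interchanged --- a typo that is harmless for the convergence analysis, since the hypotheses on $\lambda$ and $k$ depend only on the sum $c_1+c_2$.
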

\begin{proof}
From $x^*\in F(S)$, the definition of $z_{n+1}$, the convexity of $||.||^2$ and the nonexpansiveness of $S$, 
\begin{align}
||z_{n+1}-x^*||^2&=||\alpha_n(y_{n+1}-x^*)+(1-\alpha_n)(Sy_{n+1}-x^*)||^2\notag\\ 
&\le\alpha_n|| y_{n+1}-x^*||^2+(1-\alpha_n)||Sy_{n+1}-Sx^*||^2\notag\\
&\le\alpha_n|| y_{n+1}-x^*||^2+(1-\alpha_n)||y_{n+1}-x^*||^2\notag\\
&=|| y_{n+1}-x^*||^2.\label{eq:1*}
\end{align}
From Lemma $\ref{lem0}$, by replacing $n+1$ by $n$, we have
\begin{equation}\label{eq:2}
\left\langle y_{n}-x_{n-1},y-y_{n}\right\rangle\ge \lambda\left(f(y_{n-1},y_{n})-f(y_{n-1},y)\right),~\forall y\in C.
\end{equation}
Substituting $y=y_{n+1}$ onto $(\ref{eq:2})$ and a straightforward computation yield
\begin{equation}\label{eq:3}
\lambda\left(f(y_{n-1},y_{n+1})-f(y_{n-1},y_{n})\right)\ge\left\langle y_{n}-x_{n-1},y_{n}-y_{n+1}\right\rangle .
\end{equation}
Lemma $\ref{lem0}$ with $y=x^*$ leads to
\begin{equation}\label{eq:3*}
\left\langle y_{n+1}-x_n,x^*-y_{n+1}\right\rangle\ge \lambda\left(f(y_n,y_{n+1})-f(y_n,x^*)\right).
\end{equation}
Since $x^*\in EP(f,C)$ and $y_n\in C$, $f(x^*,y_n)\ge 0$. Thus, from the monotonicity of $f$ one has $f(y_n,x^*)\le 0$. 
This together with $(\ref{eq:3*})$ implies that 
\begin{equation}\label{eq:4}
\left\langle y_{n+1}-x_n,x^*-y_{n+1}\right\rangle\ge \lambda f(y_n,y_{n+1}).
\end{equation}
By the Lipschitz-type continuity of $f$,
\begin{equation*}
f(y_{n-1},y_n)+f(y_n,y_{n+1})\ge f(y_{n-1},y_{n+1})-c_1||y_{n-1}-y_n||^2-c_2||y_n-y_{n+1}||^2.
\end{equation*}
Thus,
\begin{equation}\label{eq:5}
f(y_n,y_{n+1})\ge f(y_{n-1},y_{n+1})-f(y_{n-1},y_n)-c_1||y_{n-1}-y_n||^2-c_2||y_n-y_{n+1}||^2.
\end{equation}
The relations $(\ref{eq:4})$ and $(\ref{eq:5})$ lead to
\begin{eqnarray*}
\left\langle y_{n+1}-x_n,x^*-y_{n+1}\right\rangle&\ge& \lambda\left\{f(y_{n-1},y_{n+1})-f(y_{n-1},y_n)\right\}\nonumber\\
&&-\lambda c_1||y_{n-1}-y_n||^2-\lambda c_2||y_n-y_{n+1}||^2.\label{eq:6}
\end{eqnarray*}
Combining this and the relation $(\ref{eq:3})$, one gets
\begin{eqnarray*}
\left\langle y_{n+1}-x_n,x^*-y_{n+1}\right\rangle&\ge& \left\langle y_{n}-x_{n-1},y_{n}-y_{n+1}\right\rangle-\lambda c_1||y_{n-1}-y_n||^2\nonumber\\
&&-\lambda c_2||y_n-y_{n+1}||^2.
\end{eqnarray*}
Thus,
\begin{eqnarray}\label{eq:7}
2\left\langle y_{n+1}-x_n,x^*-y_{n+1}\right\rangle&-& 2\left\langle y_{n}-x_{n-1},y_{n}-y_{n+1}\right\rangle\ge-2\lambda c_1||y_{n-1}-y_n||^2\nonumber\\
&&-2\lambda c_2||y_n-y_{n+1}||^2.
\end{eqnarray}
We have the following fact
\begin{eqnarray}
&&2\left\langle y_{n+1}-x_n,x^*-y_{n+1}\right\rangle=||x_n-x^*||^2-||y_{n+1}-x^*||^2-||x_n-y_{n+1}||^2\nonumber\\ 
&=& ||x_n-x^*||^2-||y_{n+1}-x^*||^2-||x_n-x_{n-1}||^2-2\left\langle x_n-x_{n-1},x_{n-1}-y_{n+1}\right\rangle\nonumber\\
&&-||x_{n-1}-y_{n+1}||^2\nonumber\\
&=&||x_n-x^*||^2-||y_{n+1}-x^*||^2-||x_n-x_{n-1}||^2-2\left\langle x_n-x_{n-1},x_{n-1}-y_{n+1}\right\rangle\nonumber\\
&&-||x_{n-1}-y_{n}||^2-2\left\langle x_{n-1}-y_{n},y_{n}-y_{n+1}\right\rangle-||y_{n}-y_{n+1}||^2.\label{eq:8**}
\end{eqnarray}
By the triangle, Cauchy-Schwarz and Cauchy inequalities,
\begin{eqnarray*}
&-&2\left\langle x_n-x_{n-1},x_{n-1}-y_{n+1}\right\rangle\le2||x_n-x_{n-1}||||x_{n-1}-y_{n+1}||\\
&&\le2||x_n-x_{n-1}||||x_{n-1}-y_{n}||+2||x_n-x_{n-1}||||y_{n}-y_{n+1}||\\ 
&&\le|| x_n-x_{n-1}||^2+||x_{n-1}-y_{n}||^2+k|| x_n-x_{n-1}||^2+\frac{1}{k}||y_{n}-y_{n+1}||^2.
\end{eqnarray*}
This together with $(\ref{eq:8**})$ implies that 
\begin{align*}
&2\left\langle y_{n+1}-x_n,x^*-y_{n+1}\right\rangle\le ||x_n-x^*||^2-||y_{n+1}-x^*||^2+k|| x_n-x_{n-1}||^2\nonumber\\
&+2\left\langle y_n-x_{n-1},y_{n}-y_{n+1}\right\rangle+\left(\frac{1}{k}-1\right)||y_{n}-y_{n+1}||^2\label{eq:8***}.
\end{align*}
Thus, 
\begin{eqnarray}
&&2\left\langle y_{n+1}-x_n,x^*-y_{n+1}\right\rangle-2\left\langle y_n-x_{n-1},y_{n}-y_{n+1}\right\rangle\le||x_n-x^*||^2\nonumber\\
&&-||y_{n+1}-x^*||^2+k|| x_n-x_{n-1}||^2+\left(\frac{1}{k}-1\right)||y_{n}-y_{n+1}||^2\label{eq:8***}.
\end{eqnarray}
Combining $(\ref{eq:7})$ and $(\ref{eq:8***})$ we obtain 
\begin{eqnarray*}
-2\lambda c_1||y_{n-1}-y_n||^2&-&2\lambda c_2||y_n-y_{n+1}||^2\le||x_n-x^*||^2-||y_{n+1}-x^*||^2\\
&+&k|| x_n-x_{n-1}||^2+\left(\frac{1}{k}-1\right)||y_{n}-y_{n+1}||^2. 
\end{eqnarray*}
Thus, from the definition of $\epsilon_n$,
\begin{eqnarray}
||y_{n+1}-x^*||^2&\le&||x_n-x^*||^2+k|| x_n-x_{n-1}||^2\nonumber\\
&+&2\lambda c_1||y_{n-1}-y_n||^2-\left(1-\frac{1}{k}-2\lambda c_2\right)||y_{n}-y_{n+1}||^2\nonumber\\
&=&||x_n-x^*||^2+\epsilon_n.\label{eq:81*} 
\end{eqnarray}
From (\ref{eq:1*}) and (\ref{eq:81*}), we obtain
$$ ||z_{n+1}-x^*||^2\leq ||y_{n+1}-x^*||^2\leq  ||x_n-x^*||^2+\epsilon_n.$$
Thus, we obtain the desired conclusion because of the definition of $w_{n+1}$. Lemma $\ref{lem1}$ is proved.
\end{proof}
We have the following main result.
\begin{theorem}\label{theo.1}
Let $C$ be a nonempty closed convex subset of a real Hilbert space $H$. Assume that the bifunction $f$ satisfies all conditions $\rm (A1)-(A4)$ 
and $S:C\to C$ is a nonexpansive mapping. In addition the solution set $EP(f,C)\cap F(S)$ is nonempty. Then, the sequences 
$\left\{x_n\right\}$, $\left\{y_n\right\}$, $\left\{z_n\right\}$ generated by Algorithm 
$\ref{algor1}$ converge strongly to $P_{EP(f,C)\cap F(S)}(x_0)$.
\end{theorem}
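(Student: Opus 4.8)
The plan is to run the standard hybrid (CQ) projection argument, the only genuinely new point being the estimate $\epsilon_n\to 0$; everything else is routine for methods of this type. Throughout write $\Gamma=EP(f,C)\cap F(S)$ and $x^\dagger=P_\Gamma(x_0)$, which exists and is unique since $\Gamma$ is nonempty, closed and convex.

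First I would verify that the iteration is well defined by proving $\Gamma\subseteq\Omega_n$ for all $n$. Expanding the inequality that defines $C_n$, the terms $\|z\|^2$ cancel, so $C_n$ is a half-space; $Q_n$ is a half-space by definition; hence each $\Omega_n=C_n\cap Q_n$ is closed and convex, and $P_{\Omega_n}(x_0)$ is meaningful as soon as $\Omega_n\neq\emptyset$. The inclusion $\Gamma\subseteq C_n$ is exactly Lemma \ref{lem1} applied to $x^*\in\Gamma$. The inclusion $\Gamma\subseteq Q_n$ is proved by induction: $Q_0=H$, and if $\Gamma\subseteq\Omega_{n-1}$ then, since $x_n=P_{\Omega_{n-1}}(x_0)$, Lemma \ref{lem.PropertyPC}(ii) gives $\langle x_0-x_n,z-x_n\rangle\le 0$ for all $z\in\Omega_{n-1}\supseteq\Gamma$, i.e. $\Gamma\subseteq Q_n$. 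In particular $x^\dagger\in\Omega_{n-1}$, so $\|x_n-x_0\|\le\|x^\dagger-x_0\|$ and $\{x_n\}$ is bounded. Moreover, the defining inequality of $Q_n$ is precisely the characterization $x_n=P_{Q_n}(x_0)$; since $x_{n+1}\in\Omega_n\subseteq Q_n$, Lemma \ref{lem.PropertyPC}(i) (with $C=Q_n$, $y=x_0$) yields $\|x_{n+1}-x_n\|^2\le\|x_{n+1}-x_0\|^2-\|x_n-x_0\|^2$. The sequence $\{\|x_n-x_0\|\}$ is therefore nondecreasing and bounded above, hence convergent, and telescoping the last inequality gives $\sum_n\|x_{n+1}-x_n\|^2<\infty$; in particular $\|x_{n+1}-x_n\|\to 0$.

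The main obstacle is to show $\epsilon_n\to 0$, equivalently $\|y_{n+1}-y_n\|\to 0$; this is where the tailored form of $\epsilon_n$ and the technique Lemma \ref{lem.technique} are used. Since $x_{n+1}\in C_n$ and $\|w_{n+1}-x_{n+1}\|^2\ge 0$, the definition of $C_n$ forces $\epsilon_n\ge -\|x_{n+1}-x_n\|^2$. Substituting the definition of $\epsilon_n$ and writing $A=2\lambda c_2$, $B=1-\tfrac1k-2\lambda c_1$ (for which condition (a) guarantees $B>A\ge 0$), this rearranges to $B\|y_{n+1}-y_n\|^2\le N_n+A\|y_n-y_{n-1}\|^2$, where $N_n=k\|x_n-x_{n-1}\|^2+\|x_{n+1}-x_n\|^2$ is summable by the previous step. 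Applying Lemma \ref{lem.technique} with $P_n=\|y_n-y_{n-1}\|^2$, $\beta=A$, and any $\alpha\in(A,B)$ (so that $M_n=(B-\alpha)\|y_{n+1}-y_n\|^2\ge 0$) yields $M_n\to 0$, hence $\|y_{n+1}-y_n\|\to 0$ and $\epsilon_n\to 0$.

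Finally I would assemble the limits. From $x_{n+1}\in C_n$ together with $\epsilon_n\to 0$ and $\|x_{n+1}-x_n\|\to 0$ I get $\|w_{n+1}-x_n\|\to 0$; as $w_{n+1}$ is whichever of $y_{n+1},z_{n+1}$ is farther from $x_n$, both $\|y_{n+1}-x_n\|\to 0$ and $\|z_{n+1}-x_n\|\to 0$, whence $\|z_{n+1}-y_{n+1}\|\to 0$, and since $z_{n+1}-y_{n+1}=(1-\alpha_n)(Sy_{n+1}-y_{n+1})$ with $1-\alpha_n\ge 1-a>0$ also $\|Sy_{n+1}-y_{n+1}\|\to 0$. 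Choosing a subsequence with $x_{n_k}\rightharpoonup p$ (possible by boundedness) gives $y_{n_k}\rightharpoonup p$; demiclosedness of $I-S$ (Lemma \ref{lem.demiclose}(ii)) yields $p\in F(S)$, while passing to the limit in Lemma \ref{lem0}, using the weak continuity (A3) and $f(p,p)=0$ from (A1), yields $f(p,y)\ge 0$ for all $y\in C$, i.e. $p\in EP(f,C)$; thus $p\in\Gamma$. Then $\|x_{n_k}-x_0\|\le\|x^\dagger-x_0\|$, weak lower semicontinuity of the norm, and minimality of $x^\dagger=P_\Gamma(x_0)$ force $\|p-x_0\|=\|x^\dagger-x_0\|=\lim_k\|x_{n_k}-x_0\|$, so $p=x^\dagger$ by uniqueness; since $x_{n_k}-x_0\rightharpoonup p-x_0$ with norms converging to $\|p-x_0\|$, the Kadec--Klee property upgrades this to $x_{n_k}\to x^\dagger$ strongly. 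As the limit does not depend on the subsequence, the whole sequence satisfies $x_n\to x^\dagger$, and $\|y_{n+1}-x_n\|\to 0$, $\|z_{n+1}-x_n\|\to 0$ then give $y_n\to x^\dagger$ and $z_n\to x^\dagger$.
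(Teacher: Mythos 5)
Your proposal is correct and follows essentially the same three-step hybrid (CQ) argument as the paper: the inclusion $EP(f,C)\cap F(S)\subseteq \Omega_n$ via Lemma \ref{lem1} and induction, the monotonicity/summability of $\{\|x_n-x_0\|\}$ leading to an application of Lemma \ref{lem.technique}, and the weak-cluster-point argument with demiclosedness, weak lower semicontinuity of the norm, and the Kadec--Klee property. The only (harmless) deviation is in the middle step: the paper applies Lemma \ref{lem.technique} with $M_n=\|w_{n+1}-x_{n+1}\|^2$ to get $\|w_{n+1}-x_{n+1}\|\to 0$ at once and then deduces $\|y_{n+1}-y_n\|\to 0$ by triangle inequalities, whereas you first extract $\|y_{n+1}-y_n\|\to 0$ and $\epsilon_n\to 0$ (via the shifted choice $M_n=(B-\alpha)\|y_{n+1}-y_n\|^2$ with an intermediate $\alpha\in(A,B)$) and only then return to the $C_n$-inequality for $\|w_{n+1}-x_n\|\to 0$; both orderings are valid under condition (a) of Algorithm \ref{algor1}.
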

\begin{proof} We divide the proof of Theorem $\ref{theo.1}$ into three steps.\\
\textbf{Claim 1.} $EP(f,C)\cap F(S)\subset \Omega_n$ for all $n\ge 0$.\\
Lemma $\ref{lem1}$ and the definition of $C_n$ ensure that $EP(f,C)\cap F(S)\subset C_n$ for all $n\ge 0$. 
We have $EP(f,C)\cap F(S)\subset H=\Omega_0$. Suppose that $EP(f,C)\cap F(S)\subset \Omega_n$ for some $n\ge 0$. From $x_{n+1}=P_{\Omega_n}(x_0)$ 
and Lemma $\ref{lem.PropertyPC}$.ii., we see that $\left\langle z-x_{n+1},x_0-x_{n+1}\right\rangle\le 0$ for all $z\in \Omega_n$. Thus, 
$\left\langle z-x_{n+1},x_0-x_{n+1}\right\rangle\le 0$ for all $z\in EP(f,C)\cap F(S)$. Hence, from the definition of $Q_{n+1}$, $EP(f,C)\cap F(S)\subset Q_{n+1}$ or 
$EP(f,C)\cap F(S)\subset \Omega_{n+1}$. By the induction, $EP(f,C)\cap F(S)\subset \Omega_n$ for all $n\ge 0$. Since $EP(f,C)\cap F(S)$ is nonempty, 
so $\Omega_n$ is. Therefore, $P_{\Omega_n}(x_0)$ and $P_{EP(f,C)\cap F(S)}(x_0)$ are well-defined.\\
\textbf{Claim 2.} $\lim\limits_{n\to\infty}||x_{n+1}-x_n||=\lim\limits_{n\to\infty}||z_n-x_n||=\lim\limits_{n\to\infty}||y_n-x_n||=\lim\limits_{n\to\infty}||y_{n+1} - y_{n}||=0$
and $\lim\limits_{n\to\infty}||y_n-Sy_{n}||=0$.\\
Indeed, from the definition of $Q_n$ and Lemma $\ref{lem.PropertyPC}$.ii., $x_n=P_{Q_n}(x_0)$. Thus, from Lemma $\ref{lem.PropertyPC}$.i., we have
\begin{equation}\label{eq:8}
||z-x_n||^2\le||z-x_0||^2-||x_n-x_0||^2,~\forall z\in Q_n.
\end{equation}
Substituting $z=x^\dagger:=P_{EP(f,C)\cap F(S)}(x_0)\in Q_n$ onto $(\ref{eq:8})$, one has 
\begin{equation}\label{eq:8*}
||x^\dagger-x_0||^2-||x_n-x_0||^2\ge ||x^\dagger-x_n||^2\ge 0.
\end{equation}
Thus, the sequence $\left\{||x_n-x_0||\right\}$ and $\left\{x_n\right\}$ are bounded. The relation $(\ref{eq:8})$ with $z=x_{n+1}\in Q_n$ leads to
\begin{equation}\label{eq:9}
0\le||x_{n+1}-x_n||^2\le||x_{n+1}-x_0||^2-||x_n-x_0||^2.
\end{equation}
This implies that $\left\{||x_n-x_0||\right\}$ is non-decreasing. Hence, there exists the limit of $\left\{||x_n-x_0||\right\}$. 
By $(\ref{eq:9})$, 
$$
\sum_{n=1}^K||x_{n+1}-x_n||^2\le ||x_{K+1}-x_0||^2-||x_1-x_0||^2,~\forall K\ge 1.
$$
Passing the limit in the last inequality as $K\to\infty$, we obtain
\begin{equation}\label{eq:10*}
\sum_{n=1}^\infty||x_{n+1}-x_n||^2<+\infty.
\end{equation}
Thus,
\begin{equation}\label{eq:11*}
\lim_{n\to\infty}||x_{n+1}-x_n||=0.
\end{equation}
From the definition of $C_n$ and $x_{n+1}\in C_n$,
\begin{equation}\label{eq:12}
 ||w_{n+1} - x_{n+1}||^2\leq ||x_n-x_{n+1}||^2+\epsilon_n.
\end{equation}
Set $M_{n}=||w_{n+1} - x_{n+1}||^2$, $N_n=||x_n-x_{n+1}||^2+k||x_n-x_{n-1}||^2$, $P_n=||y_n-y_{n-1}||^2$, 
$\beta=2\lambda c_2$, 
and  $\alpha=1-\frac{1}{k}-2\lambda c_1$. From the definition of $\epsilon_n$, $\epsilon_n=k||x_n-x_{n-1}||^2+\beta P_n-\alpha P_{n+1}$. 
Thus, from $(\ref{eq:12})$, 
\begin{equation}\label{eq:13*}
M_{n}\le N_n+\beta P_n-\alpha P_{n+1}.
\end{equation}
From the hypothesises of $\lambda, ~k$ and $(\ref{eq:10*})$, we see that $\alpha>\beta\ge 0$ and $\sum_{n=1}^\infty N_n<+\infty$. Lemma 
$\ref{lem.technique}$ and $(\ref{eq:13*})$ imply that $M_{n}\to 0$, or $||w_{n+1} - x_{n+1}||\to 0$. Thus, $||w_{n+1} - x_{n}||\to 0$ due 
to the relation (\ref{eq:11*}) and the triangle inequality. Hence, from the definition of $w_{n+1}$, we obtain
\begin{equation*}
\lim_{n\to\infty}||z_{n+1}- x_{n}||=\lim_{n\to\infty}||y_{n+1}- x_{n}||=0.
\end{equation*}
These together with (\ref{eq:11*}) and the triangle inequality imply that
\begin{equation}\label{eq:14}
\lim_{n\to\infty}||z_{n+1}- x_{n+1}||=\lim_{n\to\infty}||y_{n+1}- x_{n+1}||=\lim_{n\to\infty}||z_{n+1}-y_{n+1}||=0.
\end{equation}
By the triangle inequality,
\begin{eqnarray*}
||y_{n+1}-y_{n}||\le||y_{n+1}-x_{n+1}||+||x_{n+1}-x_n||+||x_{n}-y_{n}||.
\end{eqnarray*}
From the last inequality, $(\ref{eq:11*})$ and $(\ref{eq:14})$,
\begin{equation}\label{eq:15*}
\lim_{n\to\infty}||y_{n+1} - y_{n}||=0.
\end{equation}
From the definition of $z_{n+1}$ we have 
\begin{equation*}\label{eq:16}
||z_{n+1}-y_{n+1}||=(1-\alpha_n)||y_{n+1}-Sy_{n+1}||\ge (1-a)||y_{n+1}-Sy_{n+1}||.
\end{equation*}
This together with $(\ref{eq:14})$ and $1-a>0$ implies that $ \lim_{n\to\infty}||y_{n+1} - Sy_{n+1}||=0$.\\
\textbf{Claim 3.} $x_n\to x^\dagger=P_{EP(f,C)\cap F(S)}(x_0)$ as $n\to \infty$.\\
Assume that $p$ is any weak cluster point of $\left\{x_n\right\}$. 
Without loss of generality, we can write $x_n\rightharpoonup p$ as $n\to \infty$. Since $||x_n-y_n||\to 0$, $y_n\rightharpoonup p$. 
Now, we show that $p\in EP(f,C)\cap F(S)$. Indeed, from Claim 2 and the demiclosedness of $S$ we obtain $p\in F(S)$. By 
Lemma $\ref{lem0}$, we get 
\begin{equation}\label{eq:15}
\lambda\left(f(y_n,y)-f(y_n,y_{n+1})\right)\ge \left\langle x_n-y_{n+1},y-y_{n+1}\right\rangle,~\forall y\in C.
\end{equation}
Passing to the limit in $(\ref{eq:15})$ as $n\to\infty$ and using Claim 2, the bounedness of $\left\{y_n\right\}$ 
and $\lambda>0$ we obtain $f(p,y)\ge 0$ for all $y\in C$. Hence, $p\in EP(f,C)$. Thus, $p\in EP(f,C)\cap F(S)$.  From the inequality $(\ref{eq:8*})$, we get
$ ||x_n-x_0||\le ||x^\dagger-x_0||,$
where $x^\dagger=P_{EP(f,C)\cap F(S)}(x_0)$. By the weak lower semicontinuity of the norm $||.||$ and $x_n\rightharpoonup p$, we have
\begin{equation*}
||p-x_0||\le \lim_{n\to\infty}\inf||x_{n}-x_0||\le \lim_{n\to\infty}\sup||x_{n}-x_0||\le||x^\dagger-x_0||.
\end{equation*}
By the definition of $x^\dagger$, $p=x^\dagger$ and $\lim_{n\to\infty}||x_{n}-x_0||=||x^\dagger-x_0||$. Thus, $\lim_{n\to\infty}||x_{n}||=||x^\dagger||$. 
By the Kadec-Klee property of the Hilbert space $H$, we have $x_{n}\to x^\dagger=P_{EP(f,C)\cap F(S)}(x_0)$ as $n\to\infty$. From Claim 2, 
we also see that $\left\{y_n\right\}$ and $\left\{z_n\right\}$ converge strongly to $P_{EP(f,C)\cap F(S)}(x_0)$. Theorem $\ref{theo.1}$ is proved.
\end{proof}
\begin{corollary}\label{cor1}
Let $C$ be a nonempty closed convex subset of a real Hilbert space $H$. Assume that the bifunction $f$ satisfies all conditions $\rm (A1)-(A4)$. 
In addition the solution set $EP(f,C)$ is nonempty. Let $\left\{x_n\right\},\left\{y_n\right\}$ be two sequences generated by the following manner: 
$x_0=x_1\in H$, $y_0=y_1\in C$ and
\begin{equation*}
\left \{
\begin{array}{ll}
y_{n+1}=  \underset{y\in C}{\rm argmin} \{ \lambda f(y_n, y) +\frac{1}{2}||x_n-y||^2\},\\
C_n=\left\{z\in H: ||y_{n+1}-z||^2\leq ||x_n-z||^2+\epsilon_n \right\},\\
Q_n=\left\{z\in H: \left\langle x_0-x_n,z-x_n\right\rangle\le 0\right\},\\
x_{n+1}=P_{C_n\cap Q_n}(x_0),
\end{array}
\right.
\end{equation*}
where $\epsilon_n, \lambda, k$ are defined as in Algorithm $\ref{algor1}$. Then, the sequences $\left\{x_n\right\}$, $\left\{y_n\right\}$ 
converge strongly to $P_{EP(f,C)}(x_0)$.
\end{corollary}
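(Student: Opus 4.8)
The plan is to obtain Corollary \ref{cor1} as the special case of Theorem \ref{theo.1} corresponding to the choice $S = I$, the identity mapping on $C$. First I would check that $I$ satisfies the hypotheses imposed on $S$ in Theorem \ref{theo.1}: the identity is trivially nonexpansive since $\|Ix - Iy\| = \|x-y\|$ for all $x, y \in C$, it maps $C$ into $C$, and its fixed point set is $F(I) = C$. Consequently $EP(f,C) \cap F(I) = EP(f,C) \cap C = EP(f,C)$, because $EP(f,C) \subset C$ by definition. In particular, the nonemptiness assumption $EP(f,C) \neq \emptyset$ in the corollary is exactly the hypothesis $EP(f,C) \cap F(S) \neq \emptyset$ of the theorem, and the limit point $P_{EP(f,C)\cap F(S)}(x_0)$ becomes $P_{EP(f,C)}(x_0)$.

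Next I would verify that Algorithm \ref{algor1} collapses to the iteration stated in the corollary when $S = I$. With $S = I$ the second line of Step 1 gives
\[
z_{n+1} = \alpha_n y_{n+1} + (1 - \alpha_n) I y_{n+1} = \alpha_n y_{n+1} + (1 - \alpha_n) y_{n+1} = y_{n+1},
\]
independently of the value of $\alpha_n \in [0,a]$. Hence $\|z_{n+1} - x_n\| = \|y_{n+1} - x_n\|$, so the selection rule $w_{n+1} = \arg\max_{t \in \{y_{n+1}, z_{n+1}\}} \|t - x_n\|$ forces $w_{n+1} = y_{n+1}$. Substituting $w_{n+1} = y_{n+1}$ into the definition of $C_n$ in Algorithm \ref{algor1} reproduces exactly the set $C_n = \{z \in H : \|y_{n+1} - z\|^2 \le \|x_n - z\|^2 + \epsilon_n\}$ appearing in the corollary, while $Q_n$, the error term $\epsilon_n$ (which depends only on the $x$- and $y$-iterates), and the update $x_{n+1} = P_{C_n \cap Q_n}(x_0)$ are all unchanged. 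Thus the sequences $\{x_n\}, \{y_n\}$ of the corollary are precisely those produced by Algorithm \ref{algor1} for this choice of $S$.

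With these identifications in place, the conclusion follows directly from Theorem \ref{theo.1}: the sequences $\{x_n\}$ and $\{y_n\}$ (together with $\{z_n\}$, which here coincides with $\{y_n\}$) converge strongly to $P_{EP(f,C) \cap F(I)}(x_0) = P_{EP(f,C)}(x_0)$. I do not anticipate a genuine obstacle, since the argument is a direct specialization; the only point requiring care is the bookkeeping check that no hypothesis of the theorem is silently lost. In particular, the parameter sequence $\{\alpha_n\}$ no longer influences the iteration, so it may be fixed arbitrarily in $[0,a]$ (for instance $\alpha_n \equiv 0$), ensuring that conditions (a) and (b) of Algorithm \ref{algor1} remain satisfied and that assumptions $\rm (A1)$--$\rm (A4)$ on $f$ carry over verbatim. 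Should a fully self-contained proof be preferred, one could instead repeat the three-claim structure of Theorem \ref{theo.1} while deleting every reference to $S$ and to $z_n$; but invoking the theorem with $S = I$ is the cleaner route.
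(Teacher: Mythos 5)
Your proposal is correct and follows exactly the paper's own argument: set $S=I$, observe that $z_{n+1}=y_{n+1}=w_{n+1}$ so Algorithm \ref{algor1} collapses to the stated iteration and $EP(f,C)\cap F(I)=EP(f,C)$, then invoke Theorem \ref{theo.1}. Your write-up merely spells out the bookkeeping that the paper leaves implicit.
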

\begin{proof}
Set $S=I$ (the identity operator in $H$), from Algorithm $\ref{algor1}$ we have $z_{n+1}=y_{n+1}=w_{n+1}$. Thus, Corollary $\ref{cor1}$ is directly followed from Theorem $\ref{theo.1}$.
\end{proof}
\begin{corollary}\label{cor2}
Let $C$ be a nonempty closed convex subset of a real Hilbert space $H$. Assume that $A:C\to H$ is a monotone and $L$-Lipschitz continuous operator 
and $S:C\to C$ is a nonexpansive mapping such that the solution set $VI(A,C)\cap F(S)$ is nonempty, where $VI(A,C)$ is the solution set of VIP 
(\ref{VIP}). Let $\left\{x_n\right\},\left\{y_n\right\}, \left\{z_n\right\}$ 
be the sequences generated by the following manner: $x_0=x_1\in H$, $y_0=y_1\in C$ and
\begin{equation}
\left \{
\begin{array}{ll}
y_{n+1}=  P_C(x_n-\lambda A(y_{n})),\\
z_{n+1}=\alpha_n y_{n+1}+(1-\alpha_n)Sy_{n+1},\\
C_n=\left\{z\in H: ||w_{n+1} - z||\leq ||x_n-z||^2+\epsilon_n\right\},\\
Q_n=\left\{z\in H: \left\langle x_0-x_n,z-x_n\right\rangle\le 0\right\},\\
x_{n+1}=P_{C_n\cap Q_n}(x_0),
\end{array}
\right.
\end{equation}
where $w_{n+1},\epsilon_n, \lambda, k$ are defined as in Algorithm $\ref{algor1}$ with $c_1=c_2=L/2$. Then, the sequences 
$\left\{x_n\right\}$, $\left\{y_n\right\},\left\{z_n\right\}$ 
converge strongly to $P_{VI(A,C)\cap F(S)}(x_0)$.
\end{corollary}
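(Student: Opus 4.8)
The plan is to realize Corollary~\ref{cor2} as the specialization of Theorem~\ref{theo.1} to the bifunction $f(x,y)=\langle A(x),y-x\rangle$. First I would observe that with this choice $EP(f,C)=VI(A,C)$, directly from the definitions \eqref{EP} and \eqref{VIP}, and that the averaging step $z_{n+1}=\alpha_n y_{n+1}+(1-\alpha_n)Sy_{n+1}$ together with $C_n$, $Q_n$ and $x_{n+1}=P_{C_n\cap Q_n}(x_0)$ are literally those of Algorithm~\ref{algor1}. The only thing to reconcile in Step~1 is that the strongly convex program collapses to a projection: completing the square gives
$$\lambda\langle A(y_n),y-y_n\rangle+\tfrac12\|x_n-y\|^2=\tfrac12\big\|y-(x_n-\lambda A(y_n))\big\|^2+\mathrm{const},$$
so its minimizer over $C$ is exactly $y_{n+1}=P_C(x_n-\lambda A(y_n))$, matching the recursion in the corollary. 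Hence the iterate sequences of the corollary coincide with those produced by Algorithm~\ref{algor1}.

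It then remains to check that $f(x,y)=\langle A(x),y-x\rangle$ satisfies (A1)--(A4). Condition (A1) is immediate: $f(x,x)=0$, and $f(x,y)+f(y,x)=-\langle A(x)-A(y),x-y\rangle\le 0$ by monotonicity of $A$. For (A2) I would compute $f(x,y)+f(y,z)-f(x,z)=\langle A(x)-A(y),y-z\rangle$ and bound it below using Cauchy--Schwarz, the $L$-Lipschitz continuity of $A$, and Young's inequality,
$$\langle A(x)-A(y),y-z\rangle\ge -L\|x-y\|\,\|y-z\|\ge -\tfrac{L}{2}\|x-y\|^2-\tfrac{L}{2}\|y-z\|^2,$$
which is precisely (A2) with $c_1=c_2=L/2$, explaining the choice of constants in the statement. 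Condition (A4) is trivial because $f(x,\cdot)$ is affine, hence convex and subdifferentiable with $\partial_2 f(x,\cdot)=\{A(x)\}$.

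The main obstacle is (A3): weak continuity of $f$ on $C\times C$ is not granted by mere $L$-Lipschitz continuity of $A$, since $\langle A(y_n),y_n\rangle$ need not converge when $y_n\rightharpoonup p$ (a product of two weakly convergent sequences). The clean way around this is to note that (A3) enters the proof of Theorem~\ref{theo.1} at a single place, namely the passage to the limit in \eqref{eq:15} within Claim~3, and to replace that step by the Minty argument available for variational inequalities. Concretely, applying Lemma~\ref{lem.PropertyPC}.ii.\ to $y_{n+1}=P_C(x_n-\lambda A(y_n))$ gives $\langle A(y_n),y-y_{n+1}\rangle\ge \tfrac1\lambda\langle x_n-y_{n+1},y-y_{n+1}\rangle$ for all $y\in C$; splitting $y-y_{n+1}=(y-y_n)+(y_n-y_{n+1})$ and using monotonicity of $A$ in the form $\langle A(y_n),y-y_n\rangle\le\langle A(y),y-y_n\rangle$, then letting $n\to\infty$ along $x_n\rightharpoonup p$ (so $y_n\rightharpoonup p$, $\|x_n-y_{n+1}\|\to0$ and $\|y_n-y_{n+1}\|\to0$ by Claim~2), yields $\langle A(y),y-p\rangle\ge 0$ for all $y\in C$. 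By the standard Minty lemma (monotonicity plus continuity of $A$) this is equivalent to $\langle A(p),y-p\rangle\ge 0$ for all $y\in C$, i.e.\ $p\in VI(A,C)$; alternatively, one may simply impose weak continuity of $A$ to secure (A3) verbatim. With this single adjustment, the remainder of the proof of Theorem~\ref{theo.1}, namely boundedness, the Kadec--Klee argument, and the identification $p=x^\dagger=P_{VI(A,C)\cap F(S)}(x_0)$, carries over unchanged, giving the asserted strong convergence of $\{x_n\}$, $\{y_n\}$ and $\{z_n\}$.
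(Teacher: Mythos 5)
Your reduction coincides with the paper's own proof in its main lines: the paper likewise sets $f(x,y)=\left\langle A(x),y-x\right\rangle$, checks (A2) with $c_1=c_2=L/2$ by the same Cauchy--Schwarz/Young estimate, treats (A1) and (A4) as immediate, and identifies the argmin step with $P_C(x_n-\lambda A(y_n))$ by completing the square, so that Corollary \ref{cor2} follows from Theorem \ref{theo.1}. The genuine difference is your treatment of (A3). The paper asserts that (A1), (A3), (A4) hold ``automatically'', but you are right that weak continuity of $(x,y)\mapsto\left\langle A(x),y-x\right\rangle$ on $C\times C$ does not follow from norm-Lipschitz continuity of $A$ when $H$ is infinite-dimensional: already for $A=I$ one has $f(y_n,y)=\left\langle y_n,y\right\rangle-\|y_n\|^2$, and $\|y_n\|^2$ need not converge when $y_n\rightharpoonup p$. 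Your repair --- using the projection characterization of $y_{n+1}$, the monotonicity of $A$, and Claim 2 of the theorem's proof to obtain $\left\langle A(y),y-p\right\rangle\ge 0$ for all $y\in C$, then invoking Minty's lemma (applicable since Lipschitz continuity gives hemicontinuity) --- correctly replaces the single place where (A3) enters Claim 3, and it is precisely the device used in the variational-inequality literature (Nadezhkina--Takahashi, Censor--Gibali--Reich). One small addition would make it airtight: the paper also uses (A3) to guarantee that $EP(f,C)$ is closed and convex, hence that $P_{EP(f,C)\cap F(S)}(x_0)$ is well defined; for this $f$ that, too, is rescued by the Minty characterization $VI(A,C)=C\cap\bigcap_{y\in C}\left\{x\in H:\left\langle A(y),y-x\right\rangle\ge 0\right\}$, an intersection of closed convex sets. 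In short, your proof is correct and strictly more careful than the paper's: the paper's one-line claim that (A3) is automatic is unjustified in infinite dimensions (harmless only in the finite-dimensional setting of its numerical examples), and your Minty argument closes that gap.
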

\begin{proof}
Set $f(x,y)=\left\langle A(x),y-x\right\rangle$ for all $x,y\in C$. It is clear that $f$ satisfies the conditions $\rm (A1)$, $\rm (A3)$, $\rm (A4)$ automatically. 
Now, we show that the condition $\rm (A2)$ holds for the bifunction $f$. Indeed, from the 
$L$ - Lipschitz continuity of $A$, we have
\begin{eqnarray*}
f(x,y)&+&f(y,z)-f(x,z)=\left\langle A(x)-A(y),y-z\right\rangle\ge-||A(x)-A(y)||||y-z||\\
&\ge&-L||x-y||||y-z||\ge-\frac{L}{2}||x-y||^2-\frac{L}{2}||y-z||^2.
\end{eqnarray*}
This implies that $f$ satisfies the condition $\rm (A2)$ with $c_1=c_2=L/2$. According to Algorithm $\ref{algor1}$, we have
\begin{eqnarray*}
y_{n+1}&=&\underset{y\in C}{\rm argmin} \{ \lambda \left\langle A(y_n),y-y_n\right\rangle+\frac{1}{2}||x_n-y||^2\}\\ 
&=&\underset{y\in C}{\rm argmin} \{\frac{1}{2}||y-(x_n-\lambda A(y_n))||^2-\frac{\lambda^2}{2}||A(y_n)||^2-\lambda \left\langle A(y_n),y_n-x_n\right\rangle\}\\
&=&\underset{y\in C}{\rm argmin} \{\frac{1}{2}||y-(x_n-\lambda A(y_n))||^2\}\\
&=&P_C\left(x_n-\lambda A(y_n)\right).
\end{eqnarray*}
Hence, Corollary \ref{cor1} is directly followed from Theorem \ref{theo.1}.
\end{proof}
\begin{remark}
\begin{itemize}
\item From the proofs of Lemma $\ref{lem1}$ and Theorem $\ref{theo.1}$, we see that Theorem $\ref{theo.1}$, Corollaries \ref{cor1} and \ref{cor2} 
remain true if the monotonicity is replaced by the pseudomonotonicity.
\item Corollary \ref{cor2} can be considered as an improvement of the results in \cite{CGR2011,NT2006a} in the sense that we only need to find a projection 
onto the constrained set $C$ at each iteration.
\item The set $\Omega_n$ in Step 2 of Algorithm \ref{algor1} can be replaced by $\Omega_n=C_n^1\cap C_n^2\cap Q_n$, where $C_n^1,C_n^2$ are 
two halfspaces defined by
\begin{eqnarray*}
&&C_n^1=\left\{z\in H: ||z_{n+1} - z||^2\leq ||y_{n+1}-z||^2\right\},\\
&&C_n^2=\left\{z\in H: ||y_{n+1} - z||^2\leq ||x_n-z||^2+\epsilon_n \right\}.
\end{eqnarray*}
\end{itemize}

\end{remark}
\section{Numerical examples}\label{Numer}
In this section, we consider two numerical examples to illustrate the convergence of Algorithm $\ref{H2015}$. The bifunction $f:C\times C\to \Re$ 
which comes from the Nash-Cournot equilibrium model in \cite{QMH2008,SVH2013} is defined by 
$$ f(x,y)=\left\langle Px+Qy+q,y-x\right\rangle, $$
where $q\in \Re^n$, $P,~Q\in \Re^{n\times n}$ are two matrices of order $n$ such that $Q$ is symmetric, positive semidefinite and $Q-P$ is 
negative semidefinite. By \cite[Lemma 6.2]{QMH2008}, $f$ is monotone and Lipschitz-type continuous with $c_1=c_2=\frac{1}{2}||P-Q||$. 
In two numerical experiments below we chose $\lambda=\frac{1}{5c_1}, k=6, x_1=x_0\in \Re^n$ and $y_0,y_1$ are the zero vector. All 
convex quadratic optimization programs are solved by the MALAB Optimization Toolbox. The algorithm is performed on a 
PC Desktop Intel(R) Core(TM) i5-3210M CPU @ 2.50GHz 2.50 GHz, RAM 2.00 GB. 


\noindent\textit{Example 1.} We consider the feasible set $C$ is defined by 
$$ C=\left\{x\in \Re^3:\sum_{i=1}^3x_i\ge 1,~0\le x_i\le 1,~i=1,\ldots,3\right\} $$
and $S$ is the identity operator $I$. This example is tested with $q=(1;-2;3)^T$ and
$$
    P =
    \left(\begin{array}{ccc}
     3.1&2&0\\
     2&3.6&0\\
0&0&3.5\\
    \end{array}\right),~
Q =
    \left(\begin{array}{ccc}
     1.6&1&0\\
     1&1.6&0\\
0&0&1.5\\
    \end{array}\right).
$$
The iterate $x_{n+1}$ is expressed by the explicit formula in \cite{SS2000}. 
In this case, the solution set $EP(f,C)\cap F(S)=EP(f,C)$ is not known. Thus, the stopping criterion used in this experiment is 
$||w_{n+1}-x_n||\le TOL=0.0001$. Table $\ref{tab:1}$ shows 
the numbers of iterates (Iter.), time for execution of Algorithm $\ref{H2015}$ in second (CPU in sec.) and 
approximation solutions $x_n$ to EP for chosing different starting points.

\begin{table}[ht]\caption{Results for given starting points in \textit{Example 1}.}\label{tab:1}
\medskip\begin{center}
\begin{tabular}{|c|c|c|c|}
\hline
$x_0$ & Iter. &CPU in sec.& $x_{n}$\\ \hline
(1; 3; 1)&377&9.18&(0.0000004; 0.9806232; 0.0194736)\\ \hline
(-3; 4; 1)&220&4.96&(0.0000000; 0.9806290; 0.0194844)\\ \hline
(3; -2; 1)&480&15.53&(0.0000004; 0.9806289; 0.0194885)\\ \hline
 \end{tabular}
\end{center}
\end{table}

\noindent\textit{Example 2.} We consider the constrained set $C$ as a box by
 $$ C=\left\{x\in \Re^3:0\le x_i\le 1,~i=1,\ldots,3\right\}. $$
Two matrices $P, Q$ are defined as in \textit{Example 1}, $q$ is the zero vector.
Let $C_i,i=1,2,3$ be three halfspaces such that $C\cap C_1\cap C_2\cap C_3= \emptyset$. 
For each $x\in \Re^3$, set
$$\Phi(x):=\frac{1}{3}\sum_{i=1}^3\min_{z\in C_i}||x-z||^2=\frac{1}{3}\sum_{i=1}^3d^2(x,C_i),~ \mbox{and}~ C_{\Phi}:=\underset{x\in C}{\arg\min}~\Phi(x).$$
Define $S:C\to C$ by 
$$ S:=P_C\left(\frac{1}{3}\sum_{i=1}^3P_{C_i}\right). $$
Since the projection is nonexpansive, $S$ is nonexpansive and $F(S)=C_\Phi$ (see, \cite[Proposition 4.2]{Y2001} ). 
In this example, we chose $C_1=\left\{x\in \Re^3:3x_1+2x_2+x_3\le -6\right\}$, $C_2=\left\{x\in \Re^3:5x_1+4x_2+3x_3\le -12\right\}$ 
and $C_3=\left\{x\in \Re^3:2x_1+x_2+x_3\le -4\right\}$. 
It is easy to show that $EP(f,C)\cap F(S)=\left\{0\right\}$. For each starting 
point $x_0$ then the sequence $\left\{x_n\right\}$ generated by Algorithm $\ref{H2015}$ converges strongly to 
$x^\dagger:=P_{EP(f,C)\cap F(S)}(x_0)=0$. The termination criterion is $||x_n-x^\dagger||\le TOL=0.001$.
The results are shown in Table $\ref{tab:2}$ for chosing different starting points and parameters $\alpha_n$.

\begin{table}[ht]\caption{Results for different starting points and parameters in \textit{Example 2}.}\label{tab:2}
\medskip\begin{center}
\begin{tabular}{|c|c|c|c|c|c|c|}
\hline
 $x_0$&\multicolumn{2}{c|}{$\alpha_n=\frac{n-1}{2(n+1)}$} &\multicolumn{2}{c|}{$\alpha_n=10^{-n}$}&\multicolumn{2}{c|}{$\alpha_n=\frac{1}{\log_{10}(n+1)}$}
\\ \cline{2-7}
& Iter. &CPU in sec.& Iter. &CPU in sec.& Iter. &CPU in sec.\\ \hline
(1; 3; 1)&23&1.54&26&1.75&67&4.45\\ \hline
(-3; 4; 1)&47&3.07&20&1.34&81&5.46\\ \hline
(3; -2; 1)&29&1.96&17&1.03&47&2.40\\ \hline
(-2; 3; -1)&7&0.40&6&0.26&18&1.18\\ \hline
 \end{tabular}
\end{center}
\end{table}

The study of the numerical experiments here is preliminary and it is obvious that EPs and fixed point problems depend on the structure of the constrained set $C$, 
 the bifunction $f$, and the mapping $S$. However, the results in Tables $\ref{tab:1}$ and $\ref{tab:2}$ have illustrated the convergence of our proposed algorithm 
and we also see that the number of iterative step and time for execution of the algorithm depend on the starting point $x_0$ and the parameter $\alpha_n$.

\end{document}